\title{The independent set problem is FPT for even-hole-free graphs}
\author{Edin Husi\' c}{Department of Mathematics, LSE, Houghton Street, London, WC2A 2AE, United Kingdom}{e.husic@lse.ac.uk}{}{}
\author{St\' ephan Thomass\' e}{Univ Lyon, CNRS, ENS de Lyon, Universit\' e Claude Bernard Lyon 1, LIP UMR5668, France \and Institut Universitaire de France}{stephan.thomasse@ens-lyon.fr}{}{}
\author{Nicolas Trotignon}{Univ Lyon, ENS de Lyon, Universit\' e Claude Bernard Lyon 1, CNRS, LIP, F-69342, LYON Cedex 07, France}{nicolas.trotignon@ens-lyon.fr}{}{}
\authorrunning{E. Husi\' c, S. Thomass\' e, N. Trotignon}
\keywords{independent set, FPT algorithm, even-hole-free graph, augmenting graph} 
\begin{document}

\maketitle

\begin{abstract}
The class of even-hole-free graphs is very similar to the class of perfect graphs, and was indeed a cornerstone in the tools leading to the proof of the Strong Perfect Graph Theorem. However, the complexity of computing a maximum independent set (MIS) is a long-standing open question in even-hole-free graphs. From the hardness point of view, MIS is W[1]-hard in the class of graphs without induced 4-cycle (when parameterized by the solution size). Halfway of these, we show in this paper that MIS is FPT when parameterized by the solution size in the class of even-hole-free graphs. The main idea is to apply twice the well-known technique of augmenting graphs  to extend some initial independent set. 
\end{abstract}

\section{Introduction}
Given a (finite, simple, undirected) graph $G=(V,E)$ we say that a subset of vertices $I \subseteq V$ is \emph{independent} if every two vertices in $I$ are non-adjacent. 
The \emph{maximum independent set problem} is the problem of finding an independent set of maximum cardinality in a given graph $G$. This problem is NP-hard even for planar graphs of degree at most three~\cite{garey2002computers}, unit disk graphs~\cite{DBLP:journals/dm/ClarkCJ90}, and $C_4$-free graphs~\cite{alekseev1982effect}. ,
To see that the independent set problem is NP-hard in the class of $C_4$-free graphs, one can use the following observation by Poljak~\cite{poljak1974note}. 
Namely, $\alpha(G') = \alpha(G) + 1$ where the graph $G'$ is obtained from $G$ by replacing a single
edge with a $P_4$ (i.e., subdividing it twice). 
By replacing every edge with a $P_4$ we obtain a graph that has girth at least nine, and thus MIS is NP-hard for $C_4$-free graphs.
Similarly, MIS is NP-hard for the class of graphs with girth at least $l$, where $l \in \mathbb{N}$ is fixed.

On the contrary, when the input is restricted to some particular class of graphs the problem can be solved efficiently. Examples of such classes are bipartite graphs~\cite{DBLP:journals/siamcomp/HopcroftK73}, chordal graphs~\cite{gavril1972algorithms} and claw-free graphs~\cite{minty1980maximal,sbihi1980algorithme}. The maximum independent set problem is also polynomially solvable when the input is restricted to the class of perfect graphs using the ellipsoid method~\cite{DBLP:journals/combinatorica/GrotschelLS81}, but it remains an open question to find a combinatorial algorithm\footnote{The term combinatorial algorithm is used for an algorithm that does not rely on the ellipsoid method.} in this case. In fact, we do not even have a combinatorial FPT algorithm for the maximum independent set problem on perfect graphs.

\begin{sloppypar}
Closely related to the class of perfect graphs is the class of even-hole-free graphs. The class of even-hole-free graphs was introduced as a class structurally similar to the class of Berge graphs. We say that a graph is \emph{Berge} if and only if it is odd-hole-free and odd-antihole-free, i.e., $\{C_5, C_7, \overline{C_7}, C_9, \overline{C_9}, \dots\}$-free\footnote{Berge graphs are exactly perfect graphs by the Strong Perfect Graph Theorem.}. 
The similarity  follows from the fact that by forbidding $C_4$, we also forbid all antiholes on at least $6$ vertices. Hence, an even-hole-free graph does not contain an antihole on at least $6$ vertices, i.e., it is $\{C_4, C_6, \overline{C_6}, \overline{C_7}, C_8, \overline{C_8} \dots\}$-free.
It should be noted that techniques obtained in the study of even-hole-free graphs were successfully used in the proof of the Strong Perfect Graph Theorem. A decomposition theorem, an algorithm for the maximum weighted clique problem and several other polynomial algorithms for classical problems in subclasses of even-hole-free graphs can be found in survey~\cite{vuvskovic2010even}.
\end{sloppypar}

We denote by $\alpha (G)$ the maximum cardinality of an independent set in a graph $G$. In this paper we consider a parameterized version of the problem, that is we consider the following decision problem.

\begin{center}
\fbox{\parbox{0.97\linewidth}{\noindent
{\sc Independent Set:}\\[.8ex]
\begin{tabular*}{.90\textwidth}{rl}
{\bf Input:} & A graph $G$.\\
{\bf Parameter:}& $k$. \\
{\bf Output:} & {\sc true} if $\alpha(G) \ge k$ and {\sc false} otherwise.
\end{tabular*}
}}
\end{center}

We say that a problem is \emph{fixed parameter tractable} (FPT) parameterized by the solution size $k$, if there is an algorithm running in time  $O(f(k)n^c)$ for some function $f$ and some constant $c$. 
More generally, a problem is fixed parameter tractable with respect to the parameter $k$ (e.g. solution size, tree-width, ...) if for any
instance of size $n$, it can be solved in time $O(f(k)n^c)$ for some fixed $c$. 
Usually, we consider whether a problem is FPT if the problem is already known to be NP-hard. 
In that case, the function $f$ is not in any way bounded by a polynomial. 
In other words, for fixed parameter tractable problems, the difficulty is not in the input size, but rather in the size of the solution (parameter).
In general, the {\sc Independent Set} problem 
is not fixed-parameter tractable (parameterized by the size of solution) unless W[1]$=$FPT or informally, we believe that there is no FPT algorithm for the problem~\cite{downey2012parameterized}. 
Recently, it has been shown that MIS is W[1]-hard for $C_4$-free graphs~\cite{bonnet_et_al:LIPIcs:2019:10218}. Even stronger, the same paper proves that MIS is W[1]-hard in any family of graphs defined by finitely many forbidden induced holes.

While the exact complexity of the maximum independent set problem is still open for the class of even-hole-free graphs, we present a step forward by showing that there is an FPT algorithm for the problem.

\paragraph*{Main idea} Our algorithm is based on the augmentation technique. More precisely, in order to compute a solution of size $k+1$, we compute disjoint solutions of size $k$. The main property we use is that the union of two independent sets in an even-hole-free graph induces a forest. The key-point of our algorithm is that if $W,X$ are disjoint solutions of size $k$, and $Y$ is some (unknown) solution of size $k+1$, then the two trees induced by $X\cup Y$ and $W\cup Y$ are very constrained. This leads to a reduction to the chordal graph case, where MIS is tractable by dynamic programming.

\paragraph*{Preliminaries} 
We consider finite, simple and undirected graphs. 
For a graph $G=(V,E)$ we write $uv \in E$ for an edge $\{u,v\} \in E(G)$, in this case $u$ and $v$ are \emph{adjacent}.
For a vertex $v \in V(G)$ we denote by $N_G(v) = \{u \in V : uv \in E\}$ \emph{the neighborhood} of $v$ and for $W \subseteq V$, we define $N_G(W) =  \cup_{w \in W} N_G(w) \setminus W$. 
We drop the subscript when it is clear from the context.
Let $S \subseteq V$. 
We say that $S$ is \emph{complete} to $W$ if every vertex in $S$ is adjacent to every vertex in $W$.
The \emph{induced subgraph} $G[W]$  is defined as the graph $H = (W, E \cap \binom{W}{2})$ where $\binom{W}{2}$ is the set of all unordered pairs in $W$. 
For a set $A$ we denote by $A^2$ the set of all ordered pairs with elements in $A$.
The graph $G[V \setminus W]$ is denoted $G\setminus W$ and when $W=\{w\}$ we write $G \setminus w$. 
A subset of vertices is called a \emph{clique} if all the vertices are pairwise adjacent.
A chordless cycle on at least four vertices is called a \emph{hole}. A hole is even (resp. odd) if it contains an even (resp. odd) number of vertices.
A \emph{path} is a graph obtained by deleting one vertex of a chordless cycle. 
A path with endvertices $u, v$ is called a $u,v$-path.
Given a path $Z$ and two of its vertices $v,u$ we denote by $vZu$ the smallest subpath of $Z$ containing $v$ and $u$.
An \emph{in-arborescence} is an orientation of a tree in which every vertex apart one (the \emph{root}) has outdegree one.

\section{Reduction steps and augmenting graphs}
\label{section:MinimalAugmentingGraphs}

Our main goal is to show that the following problem is FPT.

\begin{center}
\fbox{\parbox{0.97\linewidth}{\noindent
{\sc Independent Set in Even-Hole-Free Graphs (ISEHF):}\\[.8ex]
\begin{tabular*}{.90\textwidth}{rl}
{\bf Input:} & An even-hole-free graph $G$.\\
{\bf Parameter:}& $k$. \\
{\bf Output:} & An independent set of size $k$ if $\alpha(G) \ge k$ and {\sc false} otherwise.
\end{tabular*}
}}
\end{center}

We define a simpler version of the ISEHF problem where we know more about the structure 
of $G$. Later, we show that it suffices to find an FPT algorithm for the simpler version.

\begin{center}
\fbox{\parbox{0.97\linewidth}{\noindent
{\sc Transversal Independent Set in Even-Hole-Free Graphs (TISEHF):}\\[.8ex]
\begin{tabular*}{.90\textwidth}{rl}
{\bf Input:} & An even-hole-free graph $G$ and a partition of $V(G)$ into cliques $X_1,\dots ,X_k$.\\
{\bf Parameter:}& $k$. \\
{\bf Output:} & An independent set of size $k$ if $\alpha(G) \ge k$ and {\sc false} otherwise.
\end{tabular*}
}}
\end{center}

Note that in TISEHF, an independent set of size $k$ must intersect every clique on exactly one vertex, i.e., it must traverse all cliques.

\begin{lemma}
The ISEHF problem is FPT if and only if the TISEHF problem is FPT.
\end{lemma}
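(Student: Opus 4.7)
The $\Rightarrow$ direction is immediate: TISEHF is simply ISEHF with the extra input of a clique partition of $V(G)$ into $k$ parts, so any FPT algorithm for ISEHF solves TISEHF by discarding the partition.

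For the $\Leftarrow$ direction, assume an FPT algorithm $\mathcal{A}$ for TISEHF with running time $g(k)\,n^{c}$ and fix an ISEHF instance $(G,k)$. The plan uses the classical theorem of Addario-Berry, Chudnovsky, Havet, Reed and Seymour that every even-hole-free graph contains a \emph{bisimplicial vertex}, i.e.\ a vertex $v$ whose neighborhood $N(v)$ splits as the union of two cliques $K_1\cup K_2$. I would iteratively strip such vertices, maintaining a set $I$ (initially empty) and a clique cover $\mathcal C$ (initially empty). At iteration $i\ge 1$, find a bisimplicial vertex $v_i$ in $G_{i-1}$ (setting $G_0:=G$), write $N_{G_{i-1}}(v_i)=K_1^{i}\cup K_2^{i}$, add $v_i$ to $I$, add the two cliques $K_1^{i}\cup\{v_i\}$ and $K_2^{i}$ to $\mathcal C$, and set $G_i:=G_{i-1}\setminus N_{G_{i-1}}[v_i]$. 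Since $v_{i+1}$ is chosen outside $N[v_i]$, the set $I$ is independent in $G$; and since every vertex is eventually deleted, $\mathcal C$ covers $V(G)$, and arbitrating overlaps turns $\mathcal C$ into a genuine partition with the same number of parts.

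If at some step $|I|\ge k$, output the first $k$ elements of $I$. Otherwise the loop ends with $|I|=t\le k-1$ and a clique partition $X_1,\dots,X_q$ of $V(G)$ with $q\le 2t\le 2k-2$. Any independent set of size $k$ in $G$ uses at most one vertex per clique $X_i$, hence is contained in the union of some $k$ of them. Enumerate all $\binom{q}{k}\le\binom{2k-2}{k}$ size-$k$ subsets $S\subseteq\{1,\dots,q\}$ and invoke $\mathcal A$ on each TISEHF instance $(G[\bigcup_{i\in S}X_i],k)$ with partition $(X_i)_{i\in S}$; return an independent set if any call finds one, otherwise output \textsc{false}. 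The total running time is $O\bigl(\binom{2k-2}{k}\,g(k)\,n^{c}\bigr)$, which is FPT in $k$.

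The main obstacle is the inner loop: we need the bisimplicial vertex to be found in polynomial time, not just to exist. This is available from the constructive proof of the Addario-Berry et al.\ theorem, but handling it cleanly, together with the passage from cover to partition (assigning each vertex to a unique clique when it lies in several $K_j^{i}$), is the most delicate part of the reduction and deserves explicit verification in the full argument.
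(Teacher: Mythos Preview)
Your proof is correct, but it takes a different route from the paper. Both arguments share the same skeleton: either exhibit an independent set of size $k$ directly, or produce a clique cover of $V(G)$ with $f(k)$ parts and then brute-force over all $k$-element subfamilies via the TISEHF oracle. The difference lies in how the clique cover is obtained. The paper uses only the $C_4$-freeness of $G$: it recursively picks two non-adjacent vertices $x,y$, observes that their common neighbourhood is a clique (else a $C_4$ appears), and recurses on the non-neighbourhoods of $x$ and of $y$; this yields a cover by $2^{k-1}-1$ cliques. Your argument instead invokes the bisimplicial-vertex theorem of Addario-Berry, Chudnovsky, Havet, Reed and Seymour and greedily peels off closed neighbourhoods, giving a much smaller cover of size at most $2k-2$. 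So your approach buys a polynomially better bound on the number of TISEHF calls, at the cost of importing a substantially deeper structural result; the paper's approach is entirely self-contained and elementary. One minor remark: your worry about finding the bisimplicial vertex constructively is unnecessary --- since existence is guaranteed, you can simply test each vertex $v$ by checking whether the complement of $G[N(v)]$ is bipartite, which is polynomial.
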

\begin{proof}
Note that the only if implication is obvious, so we assume that we already have an FPT algorithm $\cal A$ for TISEHF, and provide one for ISEHF.  
We claim that it suffices to exhibit an algorithm $\cal B$ running in time $g(k)n^c$ which takes as input the pair $(G,k)$ and either outputs an independent set of size $k$ or a cover of $V(G)$ by $2^{k-1}-1$ cliques. 
Indeed, one then just has to apply algorithm $\cal A$ to every possible choice of $k$ disjoint cliques induced by the $2^{k-1}-1$ cliques which are output by $\cal B$.
We describe $\cal B$ inductively on $k$: If $k=2$, then $G$ is either a clique, or contains two non-adjacent vertices $x,y$. When $k>2$, we compute two non-adjacent vertices $x,y$ (or return the clique $G$). We now apply $\cal B$ to the graph induced by the set $X$ of non-neighbors of $x$: we either get an independent set of size $k-1$ (in which case we are done by adding $x$) or cover $X$ by $2^{k-2}-1$ cliques. We apply similarly $\cal B$ to the set $Y$ of non-neighbors of $y$. Note that $X\cup Y$ covers all vertices of $G$ except the common neighbors $N$ of $x$ and $y$. Since $G$ is $C_4$-free, $N$ is a clique, and therefore we have constructed a cover of $V(G)$ by $2(2^{k-2}-1)+1$ cliques.
\end{proof}

We turn to our main result. 
In the rest of this section we further reduce the problem to a graph together with two particular trees.
Section~\ref{section:bitrees} defines the notion of bi-trees and shows how two trees interact under certain conditions. 
Then, in Section~\ref{section:end}, we prove that bi-trees arising from even-hole-free graphs satisfy these conditions and conclude the algorithm. 

\begin{theorem}\label{theo:trans}
The TISEHF problem is FPT.
\end{theorem}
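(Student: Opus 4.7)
The plan is to proceed by induction on the parameter $k$, coupled with the augmentation idea announced in the introduction. For $k=1$ any vertex gives a solution. For the inductive step I assume an FPT algorithm $\mathcal{A}_k$ for parameter $k$ and build one for $k+1$ on an input $(G, X_1, \ldots, X_{k+1})$.

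The first step is to produce two disjoint transversal independent sets $W$, $X$ of size $k$. For this I invoke $\mathcal{A}_k$ on the $k+1$ sub-instances $(G \setminus X_i, (X_j)_{j \neq i})$ to locate a size-$k$ IS $X$; I then delete the vertices of $X$ from $G$ and run $\mathcal{A}_k$ again on the remaining sub-instances to locate a disjoint $W$. If no size-$k$ transversal IS exists at all, no size-$(k+1)$ one exists either, and the answer is false. If a single such IS $X$ can be found but no disjoint companion exists, then a branching argument on the $k$ vertices of $X$ (each of them must be either used or avoided by any target $Y$) reduces to a number of strictly smaller TISEHF instances bounded in $k$, which induction resolves.

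In the main case, with two disjoint transversal independent sets $W$ and $X$ of size $k$ in hand, I exploit the central structural fact. For any hypothetical transversal IS $Y$ of size $k+1$, the induced subgraph $G[X \cup Y]$ is bipartite and carries no edge inside $X$ or inside $Y$; any cycle in it would thus be a chordless even cycle in $G$, which is forbidden. Hence $G[X \cup Y]$ is a forest; symmetrically so is $G[W \cup Y]$. The pair of forests sharing the vertex set $Y$ is precisely the ``bi-tree'' object defined in Section~\ref{section:bitrees}. Using its combinatorial constraints together with the even-hole-freeness of $G$ as exploited in Section~\ref{section:end}, the search for $Y$ reduces to a maximum independent set problem on a chordal graph, which is tractable by dynamic programming. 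A bound of the form $f(k) \cdot \mathrm{poly}(n)$ for the overall running time then follows from the recursion.

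The hard part will be the bi-tree analysis itself. One must show that the rigidity imposed by having two forests meet on an unknown $Y$ is strong enough to encode the candidates for $Y$ as the solutions of an independent-set instance on a chordal auxiliary graph, whose size is polynomial in $n$ and whose construction is driven by a number of combinatorial choices bounded by a function of $k$. Proving that even-hole-freeness of $G$ indeed implies chordality of this auxiliary object, and that the reduction is faithful in both directions, is precisely the program carried out in Sections~\ref{section:bitrees} and~\ref{section:end}.
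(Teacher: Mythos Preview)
Your outline captures the double-augmentation idea, but the setup has a real gap. You obtain $W$ and $X$ disjoint from \emph{each other}, yet you never guarantee that either is disjoint from the unknown target $Y$; having $W\cap X=\emptyset$ says nothing about $W\cap Y$ or $X\cap Y$. The paper handles this differently: after computing $W$ on $X_1,\dots,X_k$, it branches on the $k$ events ``$w_i\in Y$'' (each a call to $\mathcal{A}$ on $k$ parts). If all $k$ calls fail, one learns not only that $W\cap Y=\emptyset$ but, crucially, that \emph{no $w_i$ lies in any independent set of size $k+1$}. This second conclusion is what forces each $w_i$ to have degree exactly~$2$ in $G[W\cup Y]$ (a leaf $w_i$ could be swapped for $y_i$), and hence that $G[W\cup Y]$ is a tree rather than an arbitrary forest. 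Your forest-only statement is too weak: the entire bi-tree machinery of Section~\ref{section:bitrees} requires the white object to be a tree and the red object to be an in-arborescence, both with the degree-$2$ constraint at the $W$- and $X$-vertices.

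There is also a misidentification of the bi-tree. It is not ``the pair of forests sharing the vertex set $Y$''; it is an abstract structure on the index set $\{1,\dots,k+1\}$ of the parts, built \emph{after} one guesses the isomorphism types of the trees $G[W\cup Y]$ and $G[X\cup Y]$ (a number of choices bounded in $k$) and cleans each part $X_j$ according to the guessed adjacencies. This guessing-and-cleaning step, which you omit, is what makes each $w_{ij}$ complete to exactly two parts and each $x_i$ complete to exactly one further part $X_{r(i)}$, and it is precisely this normalisation that lets Section~\ref{section:end} translate obstructions in the bi-tree into even holes in $G$.
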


\begin{proof}
We assume that we have already shown that there is an algorithm $\cal A$ which solves TISEHF$(G,j)$ in time $O(f(j)n^3)$ for every $j\leq k$. Our goal is to extend this by showing that $f(k+1)$ exists. Our input is a partition of $G$ into cliques $X_1,\dots,X_k,X_{k+1}$ (which we call \emph{parts}) and we aim to either find an independent set intersecting all parts or show that none exists. In what follows, we assume that an independent set $Y=\{y_1,\dots,y_k,y_{k+1}\}$ intersecting all parts exists, and whenever a future argument will end up with a contradiction, this will always be a contradiction to the existence of $Y$, and thus our output will implicitly be {\sc false}. 

The first step is to apply $\cal A$ to $X_1,\dots,X_k$ to compute an independent set $W=\{w_1,\dots ,w_k\}$. If it happens that $W\cap Y\neq \emptyset$, we guess which $w_i$ belongs to $Y$ and run $\cal A$ on the $k$ remaining parts in which we have deleted all neighbors of $w_i$. This costs $k$ calls to TISEHF$(G,k)$ which is in our budget. So we may assume that $W$ is disjoint from $Y$, and even stronger that no vertex of $W$ belongs to an independent set of size $k+1$, since one of the previous $k$ calls would have detected it. Moreover, since there is no even hole, $W\cup Y$ induces a forest $T_1$. Note that no vertex of $W$ is isolated in $T_1$ since the parts are cliques. Note also that $T_1$ cannot have a leaf $w_i$ in $W$, since $w_i$ would belong to an independent set of size $k+1$ by exchanging it with $y_i$. Thus every vertex of $W$ has degree at least two in $T_1$. Since the number of edges of $T_1$ is at most $2k$, we have that every vertex of $W$ has degree 2 and $T_1$ is a tree.

As there is only $h(k)$ possible choices for the structure of $T_1$, we call $h(k)$ branches of computations for each of these choices of $T_1$. This means that in each call, we only keep the vertices of the parts $X_i$ which corresponds to the possible neighborhoods of vertices of $W$. For instance, in the call corresponding to a tree $T_1$ in which $w_1$ is adjacent to $y_1$ and $y_2$, we delete all neighbors of $w_1$ in parts $X_3,\dots ,X_{k+1}$ and delete all non-neighbors of $w_1$ in $X_2$ (no further cleaning is needed in $X_1$ since it is a clique). Therefore, we assume that every vertex of $W$ is complete to exactly two parts (including its own) and non-adjacent to others. 
Moreover, we define a \emph{white tree} on vertex set $\{1,\dots ,k+1\}$ by having an edge between $i$ and $j$ if there exists a vertex $w$ of $W$ which is complete to $X_i$ and $X_j$. We will refer to this vertex $w$ as $w_{i,j}$. In what follows, we do not consider anymore that the vertices of $W$ belong to the parts $X_j$ and rather see them as external vertices of our problem. Thus, since we are free to rename the parts, we can assume that $k+1$ is a leaf of the white tree.

This is the crucial point of the algorithm, we have obtained a more structured input, but unfortunately we could not directly take advantage of it to conclude the main theorem. 
Instead, we apply again algorithm $\cal A$ to $X_1,\dots,X_k$ to compute a second independent set $X=\{x_1,\dots ,x_k\}$ (if such an $X$ does not exist, we thus return {\sc false} as $Y$ cannot exist). 
As done previously, we may assume that $X$ is disjoint from $Y$, the tree $T_2$ spanned by $X\cup Y$ can also be guessed, and the degrees of vertices of $X$ in $T_2$ is two (see Figure~\ref{figure1}, down-left). 
We now interpret $T_2$ in a slightly different way: we root $T_2$ at $y_{k+1}$ and orient all the edges toward the root. By doing so, every edge $\{x_i,y_i\}$ gives the arc $y_ix_i$ while the unique neighbor $y_{r(i)}$ of $x_i$, which is different from $y_i$, gives the arc  $x_iy_{r(i)}$.
We now further clean the parts $X_j$ as follows: for every $x_i$, we delete all neighbors of $x_i$ in $X_j$ for $j\neq i,r(i)$, and we delete all non-neighbors of $x_i$ in $X_{r(i)}$. 
We now have two trees which endow our parts: the white tree and the \emph{red in-arborescence} defined on vertex set $\{1,\dots ,k+1\}$ by the arc set $\{ir(i):i=1,\dots, k\}$. 
Our tool is now ready: the correlation between these two trees will provide an $O(k \cdot n^3)$ time algorithm to compute $Y$, or show that $Y$ does not exist. We now turn to a special section devoted to bi-trees, i.e., trees defined on the same set of vertices under some structural constraints.

\section{Bi-trees}
\label{section:bitrees}

Let $V$ be a set of vertices.  A \emph{bi-tree} is a triple $T = (V, A, E)$ where $E \subseteq {V \choose 2}$ is a set
of edges such that $(V, E)$ is a tree and $A \subseteq V^2$ is a set
of arcs such that $(V, A)$ is an in-arborescence.  For convenience, we view edges
of $(V, E)$ as \emph{white} edges, and arcs of $(V, A)$ as \emph{red}
arcs. 

A \emph{separation} of a bi-tree is a triple $(v, X, Y)$ such that:
\begin{itemize}
\item $V$ is partitioned into nonempty sets $\{v\}$, $X$ and $Y$,
\item no white edge has an end in $X$ and an end in $Y$, and
\item no red arc has an end in $X$ and an end in $Y$.
\end{itemize}

When the sets $X$ and $Y$ are clear from the context, we will simply say that $v$ is a separation.
Note that if $(v, X, Y)$ is a separation of a bi-tree $(V, E, A)$,
then $(X\cup \{v\}, A \cap (X\cup \{v\})^2, E \cap {X\cup \{v\} \choose 2})$ is the bi-tree \emph{induced by $T\setminus Y$}. Observe that if the root is not in $X$, then $T\setminus Y$ is rooted at $v$.

Let $T=(V, A, E)$ be a bi-tree and $a, b, v$ be three distinct
vertices of $V$.  Let $P_{ab}$ be a white path from $a$ to $b$, of
length one or two. Let $P_{av}$ be a directed red path, from $a$ to $v$,
of length at least one.  Let $P_{bv}$ be a directed red path, from $b$
to $v$, of length at least one. We suppose that the three paths are
internally vertex disjoint (meaning that if a vertex is in at least
two of the paths, then it must be $a$, $b$ or $v$). Three such paths
are said to form an \emph{obstruction directed to $v$}.

Let $T=(V, A, E)$ be a bi-tree and $a, b, c, d$ be four distinct
vertices of $V$.  Let $P_{ab}$ be a white path from $a$ to $b$,
$P_{bc}$ be a red path which is directed from $b$ to $c$ or from $c$ to
$b$, $P_{cd}$ be a white path from $c$ to $d$ and $P_{da}$ be a red
path which is directed from $d$ to $a$ or from $a$ to $d$. Suppose that
at least one of $P_{ab}$, $P_{cd}$ has length exactly one and that
the four paths are internally vertex disjoint. Four such paths are
said to form an \emph{alternating obstruction}.

 A \emph{bi-path} is a bi-tree $T = (V, A, E)$ on at least two vertices with an ordering $v_1, \dots, v_n$ of $V$ and an integer $t$ such that:
  \begin{itemize}
  \item $A = \{v_1v_2, \dots, v_{n-1}v_n\}$,
  \item $v_1 v_n \in E$,
  \item $1 \leq t \leq n-1$,
  \item if $t\geq 2$, then $\{v_1v_2, \dots, v_1v_t\}\subseteq E$, and
  \item if $t\leq n-2$, then $\{v_{t+1}v_n, \dots, v_{n-1}v_n\}\subseteq E$.
  \end{itemize}

\begin{lemma}\label{lemma:isolatingPath}
  A bi-tree $T = (V, A, E)$ on at least two vertices, with no
  separation, no directed obstruction and no alternating obstruction is a bi-path.
\end{lemma}
\begin{proof}
  \textit{Case~1}: $(V, A)$ contains some vertex with in-degree at
  least~2.

  We choose such a vertex $v$ as close as possible to the root $r$ of
  $(V, A)$.  Since $(V, A)$ is an in-arborescence, $(V, A)\setminus v$
  has at least $m\geq 2$ in-components $A_1$, \dots , $A_m$ and
  possibly one out-component $B$.
  By the choice of $v$, every vertex
  of $B$ has in-degree exactly 1. Therefore
  $(B \cup \{v\}, A\cap (B \cup \{v\})^2)$ is a directed red path from $v$
  to $r$, that we call $Z$.  We now state and prove two claims. 

  \begin{claim}\label{claim1} 
  For any $1 \leq i < j \leq m$, there is no white edge
  with one end in $A_i$ and one end in $A_j$. 
  \end{claim}
  \begin{claimproof}
  Indeed, such an edge
  would yield an obstruction directed to $v$.
  \end{claimproof}
  \begin{claim}\label{claim2}
  For every $1 \leq i \leq m$, there exists a white edge with
  one end in $A_i$ and one end in $B$ (so, in particular, $B$ exists).
  \end{claim}
  \begin{claimproof}
  For otherwise, Claim~\ref{claim1} implies that
  $(v, A_i, V\setminus (A_i \cup \{v\})$ is a separation.
  \end{claimproof}
  
  Let $P = v, \dots ,z$ be the shortest white path such that $z\in B$ where all internal vertices of $P$ are 
  in $A_1\cup \dots \cup A_m$ ($P$ has possibly length~1). 
  By Claim~\ref{claim1}, $P$ contains vertices from at most one component, say possibly $A_2$, among
  $A_1, \dots, A_m$. By Claim~\ref{claim2}, there exists a vertex $x\in A_1$ with a white neighbor $w$ in $B$.  Let $Q$ be the directed red path from $x$ to $v$.

  If $w$ is an internal vertex of $vZz$ then the edge $xw$, the
  directed path $wZz$, the path $P$, and the directed path $Q$ form an
  alternating obstruction.  If $w$ is a vertex of $zZr$ different from
  $z$, then the edge $xw$, the directed path $zZw$, the path $P$, and
  the directed path $Q$ form an alternating obstruction.  If follows
  that $w=z$.

  If $P$ has length greater than~1, then in particular $z$ has a white
  neighbor $y$ in $A_2$.  Now, the white path $xzy$ and the
  in-components $A_1$ and $A_2$ yield an obstruction directed to $v$.
  So, $P$ has length~1. Consider, by Claim~\ref{claim2}, a vertex $y'$ in $A_2$
  with a neighbor in $B$. The previous argument, with $A_1$ and $A_2$ interchanged,  shows that $y'$ is adjacent to
  $z$ (just as we proved that $x$ is adjacent to $z$).  Again, the
  white path $xzy'$ and the red in-components $A_1$ and $A_2$ yield an
  obstruction directed to $v$.

  \medskip
  \textit{Case~2}: Every vertex in $(V, A)$ has in-degree at most~1.

  Since $(V, A)$ is an in-arborescence, it follows that $(V, A)$ is a
  directed path.  Hence, there exists an ordering $v_1, \dots, v_n$ of
  the vertices of $T$ such that $A = \{v_1v_2, \dots, v_{n-1}v_n\}$.  

  \begin{sloppypar}
  Suppose that there exists a white edge $v_iv_j$ with
  $1<i<j<n$. Then there exists a white edge $v_{i'}v_k$ between
  $\{v_1, \dots, v_{i-1}\}$ and $\{v_{i+1}, \dots, v_{n}\}$ for
  otherwise $(v_i, \{v_1, \dots, v_{i-1}\}, \{v_{i+1}, \dots,
  v_{n}\})$ is a separation. 
  If $k<j$ there is an alternating obstruction, and also if $k>j$.  It
  follows that $k=j$. We proved that there exists a white edge $v_{i'}v_j$,
  with $i'<i$. By a symmetric argument, we can prove that there exists
  $j'>j$ and a white edge $v_iv_{j'}$.  Now, the white edges
  $v_{i'}v_j$,  $v_iv_{j'}$ and the red paths $v_{i'}\dots v_{i}$ and
  $v_{j}\dots v_{j'}$ form an alternating obstruction.
  \end{sloppypar}

  Thus there is no white edge $v_iv_j$ with $1<i<j<n$.
  Hence, every white edge is incident to $v_1$ or to $v_n$.  
  If there exist two white edges $v_1v_j$ and $v_iv_n$ with
  $1<i<j<n$, there is an alternating obstruction, again a
  contradiction. Hence, if we define $t$ as the greatest integer in
  $\{2, \dots, n-1\}$ such that $v_1$ is adjacent to $v_t$ in $(V, E)$
  (with $t=1$ if $v_1$ has no white neighbor among
  $v_2, \dots, v_{n-1}$), we have that $v_n$ has no white neighbor
  among $\{v_2, \dots, v_{t-1}\}$.  Since every vertex has a white
  neighbor, it follows that $v_1$ is white-complete (complete in $(V, E)$) to
  $\{v_2, \dots, v_{t}\}$ (when $t\geq 2$).  For the same reason,
  $v_n$ is white-complete to $\{v_{t+1}, \dots, v_{n-1}\}$ (when
  $t\leq n-2$).

  If $t>1$ and $v_t v_n$
  is a white edge, then $(v_t, \{v_1, \dots,
  v_{t-1}\}, \{v_{t+1}, \dots, v_n\})$ is a separation.  
  So, if $t>1$ then $v_1v_n$ is a white edge, and also if $t=1$. 
 \end{proof}

Given two bi-trees $T_1,T_2$ and a vertex $v$ of $T_1$, we denote by $(T_1,v, T_2)$ the bi-tree obtained by \emph{gluing} $T_2$ at $v$ on $T_1$, i.e., by identifying the root of $T_2$ with $v$. 
A \emph{bi-spider} is a bi-tree which is obtained by iteratively gluing bi-paths at the root vertex (see Figure~\ref{figure1}, right; a bi-spider is induced by the set $\{1,3,4,7,5\}$).
Alternatively, a bi-spider is a bi-tree with no directed obstruction and no alternating obstruction, which is either a bi-path or has only the root as a separation vertex. 

Let $T$ be a bi-tree with no directed obstruction and no alternating obstruction.
Note that the previous lemma asserts that $T$ can be obtained by iteratively gluing bi-paths. 
Indeed, a separation $v$ which is chosen as far as possible from the root must isolate a bi-path. 

Consider a vertex $v$ of a bi-tree $T$. Since $T$ can be obtained by iteratively gluing bi-paths, if $v$ is not a separation then it is a vertex in $T$ which is not used in gluing. 
Thus, the following property holds for $T$: every vertex $v$ which is not the root is either a separation vertex, a leaf of the white tree, or a leaf of the red in-arborescence. We use it to obtain the following result:
 
\begin{corollary}\label{cor:main}
A bi-tree $T = (V, A, E)$ on at least two vertices, with no directed obstruction and no alternating obstruction is either a bi-spider, or admits a separation  $(v, X, Y)$ such that
\begin{enumerate}[(a)]
\item\label{c1} $T\setminus Y$ is a bi-spider,
\item\label{c2} $v$ is either a leaf of the red in-arborescence induced by $T\setminus X$ or a leaf of the white tree induced by $T\setminus X$.
\end{enumerate}
\end{corollary}
\begin{proof}
If $T = (V, A, E)$ is not a bi-spider, it has a separation $(v, X, Y)$ distinct from the root, and we assume that among all choices, $v$ is chosen as far as possible from the root $r$ of the red in-arborescence. 
W.l.o.g., we assume that $Y$ contains $r$. 
Then $T\setminus Y$ is a bi-tree rooted at $v$ which can only admit $v$ as a separation. 
Hence, $T\setminus Y$ is a bi-spider. 
Assume moreover that $Y$ is chosen minimum by inclusion for this property (equivalently, $T\setminus Y$ is a maximum bi-spider rooted at $v$).
We claim that  $v$ is not a separation in bi-tree $T\setminus X$. 
If $v$ is a separation in $T\setminus X$ isolating a bi-path, then we have a contradiction to the minimality of $Y$. 
If $v$ is a separation not isolating a bi-path, then we have a contradiction to the choice of $v$.
Hence, $T\setminus X$ is a bi-tree in which $v$ is not a separation. 
Since $v$ is not the root either, it follows that $v$ is a white leaf or a red leaf in $T\setminus X$. 
\end{proof}

\begin{note}\label{findBiSpider}
A separation isolating a bi-spider with the properties~\eqref{c1} and~\eqref{c2} can be found efficiently. In particular, we find a separation $(v, X, Y)$ isolating a path and then take the maximal (inclusion-wise) set $X$ such that $T\setminus Y$ is still a bi-spider.
\end{note}

\section{The end of the proof}
\label{section:end}

We now resume our proof of Theorem~\ref{theo:trans} as follows. Lemma~\ref{lemma:ehfNoObstructions} shows that the bi-trees arising from even-hole-free graphs do not have the obstructions. Hence, we can use the results from Section~\ref{section:bitrees} where we proved that a bi-tree is either a bi-spider or has a separation isolating a bi-spider. Lemma~\ref{lem:bi-spider} gives an algorithm for the problem when the underlying bi-tree is 
a bi-spider. When the bi-tree is obtained by gluing bi-spiders, Lemma~\ref{lem:bi-cut} proves that combining the partial solutions for each of the bi-spiders produces a valid solution.

Let us recall the hypothesis of Theorem~\ref{theo:trans} (see Figure~\ref{figure1}):

\begin{enumerate}
\item The set of vertices of $G$ is partitioned into $k+1$ cliques $X_1,\dots ,X_{k+1}$ and an additional set $W$ consisting of $k$ vertices $w_{a_1b_1},\dots ,w_{a_kb_k}$.
\item Every $w_{a_ib_i}$ is completely joined to the two parts $X_{a_i}$ and $X_{b_i}$ and has no neighbor in the other parts.
\item The set of pairs $E=\{\{a_i,b_i\}:i=1,\dots ,k\}$, seen as edges on the vertex set $V=\{1,\dots,k+1\}$, forms a white tree in which $k+1$ is a leaf. 
\item Every $X_i$, with $1\leq i\leq k$ contains a particular vertex $x_i$.
\item The set $\{x_1,\dots ,x_{k}\}$ is an independent set.
\item For every vertex $x_i$, there is some $r(i)\neq i$ such that $x_i$ is completely joined to $X_{r(i)}\setminus x_{r(i)}$ (which is just $X_{r(i)}$ when $r(i)=k+1$).
\item The vertex $x_i$ is non-adjacent to every vertex of $X_j$, when $j\neq i$ or $j\neq r(i)$.
\item The set of ordered pairs $A=\{(i,r(i)):i=1,\dots ,k\}$, seen as arcs on the vertex set $V=\{1,\dots ,k+1\}$, forms a red in-arborescence rooted at $k+1$.
\end{enumerate}

We then have a bi-tree $T=(V,E,A)$ on the vertex set  $V=\{1,\dots ,k+1\}$. Furthermore, we want to decide if every part $X_i$, with $1\leq i\leq k+1$ contains a particular vertex $y_i$ distinct from $x_i$ and such that the set of these $y_i$'s forms an independent set.

\begin{figure}[t]
\centering
\includegraphics[width = 0.95\textwidth]{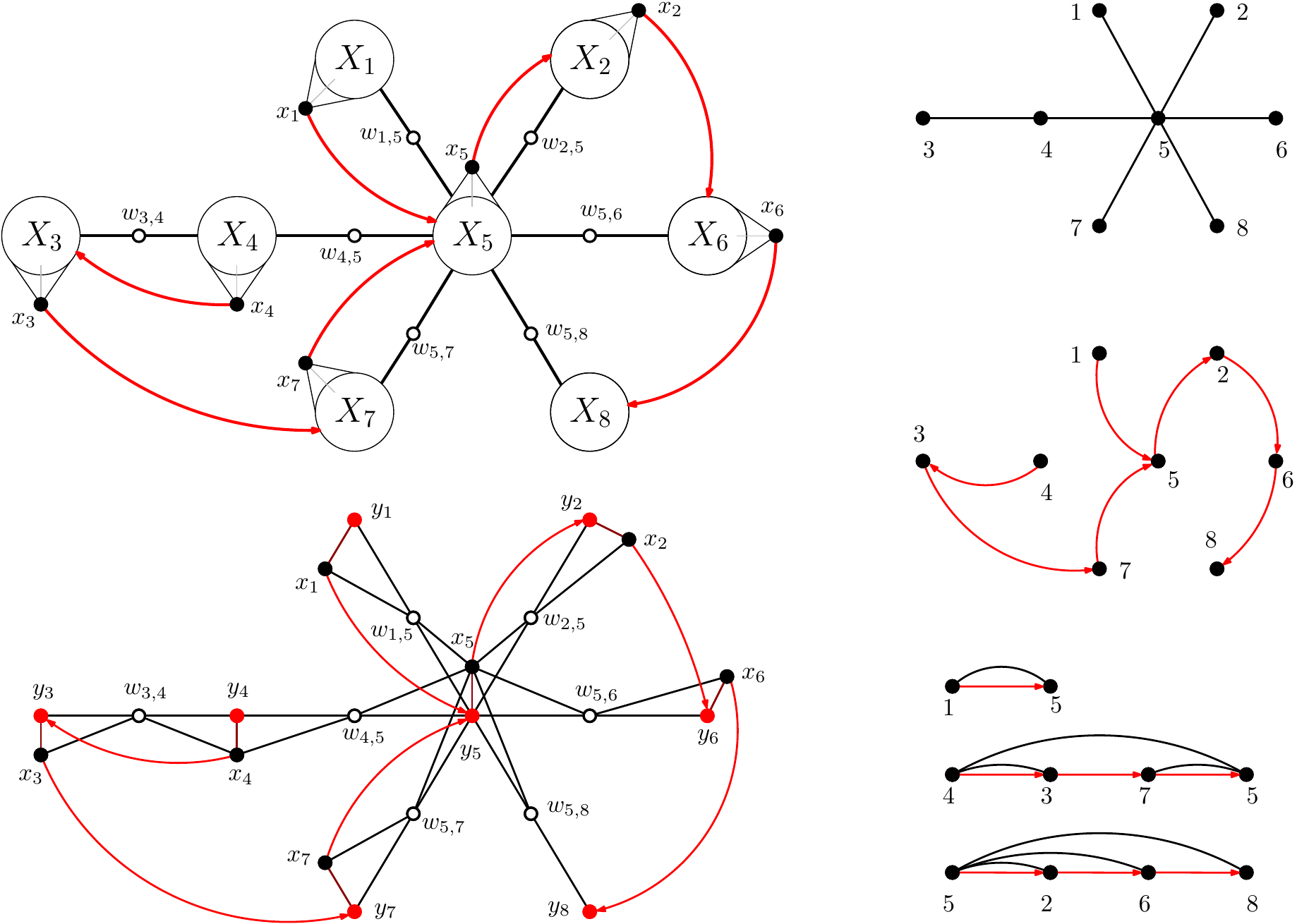}
\caption{Up-left: Graph $G$. Down-left: Set of $y_i$'s. Up-right: White tree. Middle-right: Red in-arborescence. Down-right: Decomposition of bi-tree into bi-paths.}
\label{figure1}
\end{figure}

\begin{lemma}\label{lemma:ehfNoObstructions}
If $G$ has no even holes and a set $Y$ exists, then $T=(V,E,A)$ has no directed obstruction and no alternating obstruction.
\end{lemma}
\begin{proof}
Let us assume that we have a directed obstruction, i.e., we have three distinct vertices $a, b, v$ of $V$, a white path $P_{ab}$ from $a$ to $b$ of
length one or two, a directed red path $P_{av}$ of the form $a=a_0,a_1,\dots ,a_r=v$, and a directed red path $P_{bv}$ of the form $b=b_0,b_1,\dots ,b_s=v$. Our goal is to exhibit an even hole in $G$. The path $P_{ab}$ is either $ab$ or $acb$ and corresponds in $G$ to the path $P_1$ which is either $x_a,w_{ab},x_b$ or $x_a,w_{ac},y_c,w_{cb},x_b$.  The path corresponding to $P_{av}$ is $P_2=x_{a_0},y_{a_1},x_{a_1},\dots ,y_{a_r}$ and the path corresponding to $P_{bv}$ is $P_3=x_{b_0},y_{b_1},x_{b_1},\dots ,y_{b_s}$. Note that $C=P_1\cup P_2\cup P_3$ is an even length cycle. 
Moreover, since each $x_i$ in $C$ is complete to only one class $X_j$ apart from its own, 
there is no chord in $C$, a contradiction.

Let us assume that we have an alternating obstruction on four distinct vertices $a, b, c, d$ of $V$. Two cases arise depending of the direction of the two red paths. When their directions are the same, we have a white path $P_{ab}$ from $a$ to $b$, a red path $P_{bc}$ directed from $b$ to $c$, a white path $P_{cd}$ from $c$ to $d$, and a red path $P_{ad}$ directed from $a$ to $d$. 
By definition of alternating obstruction the four paths are internally vertex disjoint. 
Assuming that $P_{ab}$ is of the form $a=a_0,a_1,\dots ,a_r=b$, we consider in $G$ the corresponding path $P_1=x_{a_0},w_{a_0a_1},y_{a_1},w_{a_1a_2},y_{a_2},w_{a_2a_3},\dots ,x_{a_r}$. 
Assuming that $P_{bc}$ is of the form $b=b_0,b_1,\dots ,b_s=c$, we consider in $G$ the corresponding path $P_2=x_{b_0},y_{b_1},x_{b_1},\dots ,y_{b_s}$. 
Assuming that $P_{ad}$ is of the form $a=d_0,d_1,\dots ,d_u=d$, we consider in $G$ the corresponding path $P_3=x_{d_0},y_{d_1},x_{d_1},\dots ,y_{d_u}$. 
Finally, if $P_{cd}$ is of the form $c=c_0,c_1,\dots ,c_v=d$, we consider in $G$ the corresponding path $P_4=y_{c_0},w_{c_0c_1},y_{c_1},w_{c_1c_2},y_{c_2},\dots ,y_{c_v}$.

When the red paths are in the opposite direction; we have a white path $P_{ab}$ from $a$ to $b$, a red path $P_{bc}$ directed from $b$ to $c$, a white path $P_{cd}$ from $c$ to $d$ and a red path $P_{da}$ directed from $d$ to $a$. 
Again, the four paths are internally vertex disjoint. 
Assuming that $P_{ab}$ is of the form $a=a_0,a_1,\dots ,a_r=b$, we consider in $G$ the corresponding path $P_1=y_{a_0},w_{a_0a_1},y_{a_1},w_{a_1a_2},y_{a_2},w_{a_2a_3},\dots ,x_{a_r}$. 
Assuming that $P_{bc}$ is of the form $b=b_0,b_1,\dots ,b_s=c$, we consider in $G$ the corresponding path $P_2=x_{b_0},y_{b_1},x_{b_1},\dots ,y_{b_s}$. 
Assuming that $P_{da}$ is of the form $d=d_0,d_1,\dots ,d_u=a$, we consider in $G$ the corresponding path $P_3=x_{d_0},y_{d_1},x_{d_1},\dots ,y_{d_u}$. Finally, if $P_{cd}$ is of the form $c=c_0,c_1,\dots ,c_v=d$, we consider in $G$ the corresponding path $P_4=y_{c_0},w_{c_0c_1},y_{c_1},w_{c_1c_2},y_{c_2},\dots ,x_{c_v}$.

Note that both $P_1,P_4$ are even length paths, and $P_2,P_3$ are odd length. Consequently $C=P_1\cup P_2\cup P_3\cup P_4$ is an even length cycle. Moreover, no chord can arise so $C$ is an even hole, a contradiction.
\end{proof}

By Corollary~\ref{cor:main}, the bi-tree $T=(V,E,A)$ is either a bi-spider, or has a separation $i$ isolating a bi-spider. We first conclude in the case of bi-spiders.

\begin{lemma}
\label{lem:bi-spider}
If $T$ is a bi-spider then there is an $O(n^3)$ time algorithm which computes $Y$ or shows that $Y$ does not exist.
\end{lemma}

\begin{proof}
Recall that a bi-spider is a graph obtained by iteratively gluing bi-paths at the root vertex. 
Denote with $T_1,  \dots, T_l$ the bi-paths glued at the root vertex $k+1$ to obtain $T$. 
Moreover, assume that the in-arborescence $T_j$ is a directed path ${j}_1, \dots, {j}_{s_j} = k+1$ for $1\le j \le l$.
Since each $T_j$ is a bi-path, there is a vertex $w_{{j}_1, j_{s_j}}$ and for some value $t_j \in \{2,\dots, s_j\}$ (if any) we have the vertices $\{w_{j_1,j_2},\dots ,w_{j_1,j_{t_j}}\}$ and $\{w_{j_{t_j+1}, j_{s_j} }, \dots ,w_{j_{s_j-1}, j_{s_j}}\}$ (see Figure~\ref{figure:bi-spider}).  

\begin{figure}[t]
\centering
\includegraphics[width = \textwidth]{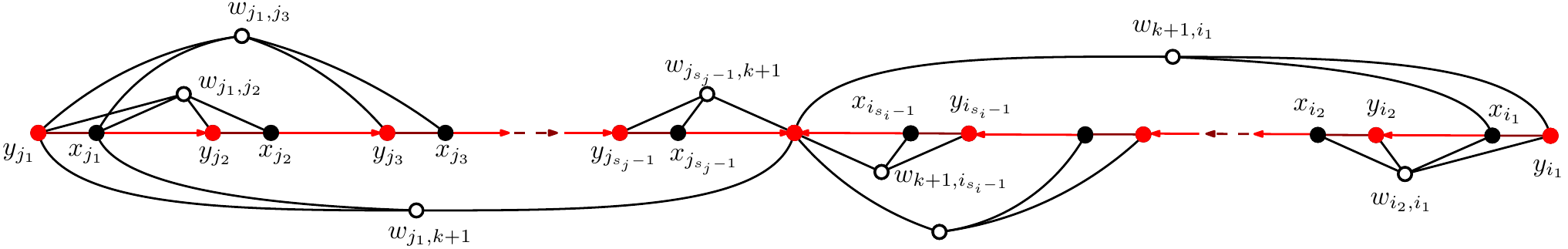}
\caption{An example for Lemma~\ref{lem:bi-spider}.}
\label{figure:bi-spider}
\end{figure}

We decide if $Y$ exists in two phases.
First, for every $1\le j \le l$ we find the set $Y_{j_1}$ of all vertices $y_{j_1}$ which are contained in an independent set of size $t_j$ intersecting $X_{j_1}, \dots, X_{j_{t_j}}$. (Intuitively, $Y_{j_1}$ is the of vertices which can be extended to an independent set traversing $X_{j_1}, \dots, X_{j_{t_j}}$, i.e., all the parts that have a common white neighbor with $y_{j_1}$ except $X_{k+1}$.)
Clearly, if $Y_{j_1}$ is empty for some $j$ then the set $Y$ does not exist. 

Secondly, we find the set $Y_{k+1}$ of vertices $y_{k+1}$ which are contained in an independent set of size $k- \sum_{j=1}^{l} t_j$ intersecting $Y_{j_1}$ and $X_{j_{t_j + 1}}, \dots X_{j_{s_{j}-1}}$ for all $1\le j \le l$. 
(Intuitively, $Y_{k+1}$ is the set of vertices which can be extended to an independent set traversing all the parts that have a common white neighbor with $y_{k+1}$.) Again, if $Y_{k+1}$ is empty then the set $Y$ does not exists. 

We first assume that we have the sets $Y_{j_1}$'s and $Y_{k+1}$ and show how to conclude the lemma in this case. Later, we show that the sets are easy to find. 
Let $y_{k+1} \in Y_{k+1}$ and let $\displaystyle J = \{y_{k+1}\} \cup_{j=1}^l \{y_{j_1}\} \cup_{j=1}^l \{y_{j_{t_j +1}}, \dots,  y_{j_{s_j -1}}\}$ be an independent set of size $k- \sum_{j=1}^{l} t_j$ intersecting all  $Y_{j_1}$ and $X_{j_{t_j + 1}}, \dots X_{j_{s_{j}-1}}$.
For each $1\le j \le l$, denote with $I_j = \{y_{j_1}, \dots, y_{j_{t_j}} \}$ an independent set which contains $y_{j_1}$ and intersects $X_{j_1}, \dots, X_{j_{t_j}}$.
Observe that the set $Y = J \cup_{j=1}^l I_j$ intersects each part of the graph. 
It suffices to prove the following claim. 

\begin{claim}
 $J \cup_{j=1}^l I_j$ is also an independent set.
\end{claim}
\begin{claimproof}
For the sake of contradiction suppose otherwise. We consider two cases. 
Either there is an edge with one end in $J$ and the other end in $I_j$ for some $j$, or there is an edge with ends in $I_j$ and $I_i$ for some $1\le i <  j\le l$. Let us deal with them respectively. 

The mentioned edge is of the form $y_{j_p} y_{i_q}$ where $p \le t_j$ and $t_i < q$ (possibly $j=i$) by definition of $I_j$ and $J$. Choose smallest such $p$.
Observe that $p \neq 1$ since $y_{j_1}$ is a vertex of both $J$ and $I_j$.
If $i_q \neq k+1$ then
$$y_{j_p}, x_{j_{p-1}}, \dots, y_{j_2}, x_{j_1}, w_{j_1, j_{s_j}} (= w_{j_1, k+1}), y_{k+1}, w_{k+1, i_q} (= w_{i_{s_i}, i_q}), y_{i_q}$$ is a cycle of even length. 
Moreover, the cycle is induced by the choice of $p$ and since $\{y_{j_2}, \dots, y_{j_p} \}$ is an independent set, a contradiction. An analogous situation arises if $i_q = k+1$.

Now, we deal with the second case where there is an edge $y_{j_p} y_{i_q}$ where $p \le t_j$, $q \le t_i$  and $j \neq i$. Choose largest such $q$.
It might happen that $p =1$ or $q=1$, but not both since  $y_{j_1}, y_{i_1} \in J$. Without loss of generality, $p \neq 1$. 
Then $$y_{j_p}, x_{j_{p-1}}, \dots, y_{j_2}, x_{j_1}, w_{j_1, j_{s_j}} (= w_{j_1, k+1}), y_{k+1}, x_{i_{s_i -1}}, y_{i_{s_i -1}}, \dots, x_{i_q}, y_{i_q}$$ is an even cycle. 
By the previous case there is no edge between $y_{k+1}$ and $I_j \cup I_i$
Moreover, by the choice of $q$, we deduce that the even cycle is induced, a contradiction.
\end{claimproof}

It remains to show how to find the sets $Y_{j_1}$'s and $Y_{k+1}$.
For the rest of the proof we only use the white tree. 
Observe that it suffices to prove the following (by setting $p = j_1$ for all $j$ and then $p=k+1$). 

\begin{claim}
Let $y_p \in X_p$ and let $G'$ be the graph induced by $X_i$ such that $pi$ is an edge in the bi-tree $T$. Remove neighbors of $y_p$ in $G'$. Then $G'$ is chordal.
\end{claim} 
\begin{claimproof}
For a contradiction, assume that $H$ is an odd hole in $G'$. 
Each part of $G'$ is a clique and, thus, contains at most two vertices of $H$. 
Therefore, there exist an induced path on three vertices $y_a,y_b,y_c$ of $H$, with $y_a, y_b, y_c$ in different parts $X_a, X_b, X_c$. By construction there are vertices $w_{p,a}, w_{p,b}$ and $w_{p,c}$. 
Then $y_{p}, w_{p, a}, y_a, y_b,y_c, w_{p, c}$ induces an even hole in $G$, a contradiction. 
Since $G$ is even-hole-free so is $G'$. Hence $G'$ is hole-free.
\end{claimproof}
Now, for each $j$, we can check if $y_{j_1}$ is in $Y_{j_1}$ by finding a maximum independent set in $G' = G[\cup_{i =2}^{t_j} X_i]\setminus N(y_{j_1})$. The latter can be done in $O(n^2)$ since $G'$ is chordal~\cite{gavril1972algorithms}.  
Then, we can check if $y_{k+1}$ is in $Y_{k+1}$ by finding a maximum independent set in $G' = G[\cup_j \{ Y_{j_1} \cup_{i =t_j + 1}^{s_j-1} X_i \}]\setminus N(y_{k+1})$. This can be done in $O(n^2)$ since $G'$ is chordal.
The overall running time follows since each part is used exactly once in some $G'$.
\end{proof}

In fact, the previous algorithm gives a stronger result:

\begin{corollary}
\label{cor:cut} 
When $T$ is a bi-spider, there is an $O(n^3)$ time algorithm which computes all vertices $y_{k+1}$ which belong to an independent set of size $k+1$.
\end{corollary}

We now deal with the case when $i$ is a separation isolating a bi-spider. 
By Corollary~\ref{cor:main} bi-tree $T$ admits a separation $(i, B, C)$ isolating a bi-spider $T\setminus C$ such that  $i$ is either a red leaf or a white leaf in $T\setminus B$.
Recall that the vertex $k+1$ is a leaf of the white tree, hence, as a separation, $i$ is not equal to $k+1$.
In particular, the vertex $x_i$ exists. 
Moreover, since $T\setminus C$ is a bi-spider it follows that $k+1 \in C$. 
As before, assuming the set $Y$ exists, we obtain the following lemma.

\begin{lemma}\label{lem:bi-cut}
There is no edge from some $y_j$ with $j\in B\setminus i$ to some vertex $u \in X_s$ with $s\in C$. 
\end{lemma}
\begin{proof}
We denote by $r$ the root of $T$. As argued above $r \in C$ ($r= k+1$).
For the sake of contradiction suppose that there is an an edge $y_ju$.

Let us consider bi-spider $T\setminus C$.
There is a red path $j=j_0,\dots ,j_a=i$ in $(V, A)$ which can be turned into an induced path $P_0=y_{j_0},x_{j_0},y_{j_1},x_{j_1},\dots ,y_{j_a},x_{j_a}$ in $G$ from $y_j$ to $x_i$ with odd length. 
There is also a white path $j=b_0,\dots ,b_d=i$ in $(V, E)$ which can be turned into an induced path $P_1=y_{b_0},w_{b_0b_1},y_{b_1},w_{b_1b_2},\dots ,x_{b_d}$ in $G$ from $y_j$ to $x_i$ with even length. 
Now, in order to conclude the lemma it suffices to find a $u, x_i$ path $P$ such that $P.P_0$ and $P.P_1$ induce cycles. Then, since $P_0$ and $P_1$ are of different parity a contradiction arises. 
In the rest of the proof we show how to find $P$.
 
First, observe that since $T\setminus C$ contains a white subtree, $u$ is non-adjacent to $y_i$ or to any $y_q$ where $q\in B$ and $q\neq j$ since it would yield an even hole (there is an even path between any two different vertices $y_p, y_q$). Hence, $u$ is adjacent to $y_j$ and non-adjacent to all other vertices in $P_0$ and $P_1$.

By Corollary~\ref{cor:main}, $i$ is either a red leaf or a white leaf in $T\setminus B$. We consider two cases.

\smallskip
\textit{Case 1:} $i$ is a red leaf. Then there is a (an undirected) red path $i = i_0, \dots, i_s = s$ in $T\setminus B$, which can be turned into an induced path $P = x_{i_0}, y_{i_1}, x_{i_1}, \dots, x_{i_{s-1}, u}$ in $G$ from $x_i$ to $u$. By construction, this path is induced. Moreover, since $i$ is a red leaf in $T\setminus B$ it follows that $y_{i_1} \neq y_i$. Therefore, both $P.P_0$ and $P.P_1$ induce cycles, i.e., there is no chord with one end in $P$ and the other in $P_0$ or $P_1$.

\smallskip
Note that the same argument holds whenever the red path $i = i_0, \dots, i_s = s$ does not contain $y_i$.
Hence, the only remaining case is when $i$ is on the red directed path from $s$ to $r$ in $T\setminus B$.
Denote with $Q$ the directed red $sr$ path in  $T\setminus B$. 

\smallskip
\textit{Case 2:} $i$ is a white leaf and $i \in Q$. 
Let $Q' = iQr$ be subpath of $Q$ starting at $i$ and ending at $r$. 
Since $i$ is not a separation of $T\setminus B$, there exists a white path $s = s_0, \dots, s_t$ 
connecting $s$ and $Q'$. Moreover, the path does not contain $i$.  
We choose the shortest such $s,Q'$ path. 
This path can be turned into an induced path $P_2 = u, w_{s_0, s_1}, y_{s_1}, \dots, w_{s_{t-1}, s_t}, y_{s_t}$ in $G$ with endpoints $u$ and $y_{s_t}$.
By the above $y_i \not \in P_2$ and also no vertex $w_{i, . }$ is used in $P_2$.

Consider the directed path $iQ s_t$ $(= i Q' s_t)$. Denote it as $i=i_0, i_1, \dots, i_{l} = s_t$. It can be turned into an induced path $P_3 = x_{i_0}, y_{i_1}, \dots y_{i_l}$ in $G$ with endpoints $x_i$ and $y_{s_t}$. 
Then $P_2. P_3$ is a $u, x_i$ path in $G$. The concatenation $P_2. P_3$ might not be an induced path, but we can shorten it to obtain an induced $u x_i$ path $P$ in $G$. Now, it can be checked that $P.P_0$ and $P. P_1$ induce cycles since $P$ does not use $y_i$ or any of the vertices $w_{i, .}$. 
\end{proof}

We are now ready to show that there is an $O(k \cdot n^3)$ time algorithm which computes $Y$ when  $T=(V,E,A)$ is a bi-tree. If $T$ is a bi-spider, we are done by Lemma~\ref{lem:bi-spider}. Otherwise, by Corollary~\ref{cor:main}, there is a separation $(i, B, C)$ which isolates a bi-spider $T\setminus C$.
By Lemma~\ref{lem:bi-cut}, one can delete all vertices $y_j \in X_j$ for $j\in B\setminus i$ with a neighbor $u \in X_k$ for $k\notin B$, and this reduction is sound since no candidate $y_j$ can have such an edge. Now, by Corollary~\ref{cor:cut}, one can compute in $O(n^3)$ time the set $X'_i \subseteq X_i$ of vertices, each of which extends, in the bi-spider $T\setminus C$, to an independent set of size $|B|$. 
From the bi-spider $T\setminus C$, we only keep these vertices $X'_i$. 
Observe that the number of parts has now decreased by at least one. 
We repeat this process until we either construct $X'_{k+1}$ or conclude that this set is empty. 
If $X'_{k+1} \neq \emptyset$, then we can reconstruct the set $Y$.
The total time is $O(k \cdot n^3)$.
\end{proof}



\bibliography{finalFPT}

\end{document}